\documentclass[12pt]{article}

\usepackage[margin=1in]{geometry}  
\usepackage{amsmath,amsfonts,amsthm,amssymb}	
\usepackage{tikz}							
\usepackage[shortlabels]{enumitem}		
\usepackage{hyperref}
\usetikzlibrary{calc}

\theoremstyle{plain}
\newtheorem{thm}{Theorem}[section]
\newtheorem{lem}[thm]{Lemma}
\newtheorem{prop}[thm]{Proposition}
\newtheorem{cor}[thm]{Corollary}

\theoremstyle{definition}
\newtheorem{df}[thm]{Definition}
\newtheorem{exmp}[thm]{Example}

\newcommand{\comment}[1]{}

\newcommand{\bbR}{\mathbb{R}}
\newcommand{\bbZ}{\mathbb{Z}}

\newcommand{\abs}[1]{\left|#1\right|}
\newcommand{\D}{\Delta}
\newcommand{\emp}{\emptyset}
\newcommand{\eq}[1]{\left[#1\right]}

\newcommand{\gr}[1]{\mathsf{#1}}

\newcommand{\mt}{\text{ mod }2}
\newcommand{\Par}[1]{\left(#1\right)}

\renewcommand{\phi}{\varphi}
\newcommand{\Rt}{\mathbb{R}^2}

\newcommand{\sd}{\textnormal{sd}}
\newcommand{\set}[1]{\left\{#1\right\} }
\newcommand{\su}{\subseteq}

\newcommand{\vk}[1]{\mathfrak{#1}}
\newcommand{\Zt}{\mathbb{Z}_2}

\comment{
barycentric
cohomological
cohomology
embeddability
equivariant
Hanani
Heawood
homeomorphic
homeomorphism
homotopy
Kampen
Kuratowski's
piecewise
planarity
representability
representable
satisfiability
satisfiable
simplicial
Sinden
Tutte
}

\numberwithin{equation}{section}

\newcounter{figure_num}

\begin{document}

\title{A Van Kampen type obstruction for string graphs}
\author{Moshe J. White \thanks{Research supported in part by the Israel Science Foundation (Grant No. 2669/21) and in part by the ERC Advanced Grant (Grant No. 834735)} \\
Institute of Mathematics,\\
Hebrew University, Jerusalem, Israel}
\date{}
\maketitle

\begin{abstract}
In this paper we prove that a graph is a string graph (the intersection graph of curves in the plane) if and only if it admits a drawing in the plane with certain properties. This also allows us to define an algebraic obstruction, similar to the Van Kampen obstruction to embeddability, which must vanish for every string graph. However, unlike in the case of graph planarity this obstruction is incomplete.


\end{abstract}

\section{Introduction} 

A graph $\gr G=(V,E)$ is a \emph{string graph}, if it is the intersection graph of a collection of curves in the plane: we may find $\set{\gamma_v\ :\ v\in V}$ such that every $\gamma_v$ is the image of a continuous map $[0,1]\to\Rt$, and for any two vertices $v\neq w$ we have $\gamma_v\cap\gamma_w\neq\emp$ if and only if $\set{v,w}\in E$. The collection of curves $\set{\gamma_v}$ is said to be a representation of $\gr G$. For a recent brief exposition of string graphs, see \cite[\S 1]{PRY}.

In some sense, we consider string graphs as a generalization of planar graphs. In one of the earliest papers dealing with string graphs, Sinden \cite{Sin} already determined that every planar graph is a string graph, while some non-planar graphs are string graphs (such as any complete graph) and some are not (such as a barycentric subdivision of any non-planar graph). The approach presented in this paper arose from previous work of the author \cite{Whi}, dealing with simplicial complexes representable by convex sets in $\bbR^d$, where representability was described in terms of the existence of certain linear maps. Furthermore, necessary conditions (but usually not sufficient) for representability were given. The lowest dimensions in which representability is not fully classified is for graphs ($1$-dimensional simplicial complexes) representable in dimension $2$. The topological restrictions on graphs are in fact satisfied by all string graphs (a class strictly containing graphs representable in dimension $2$, see \cite[Example 4.13]{Whi}).

In this paper we further explore the necessary topological restrictions on string graphs. Keeping in mind the analogy between string graphs and planar graphs, a drawing of $\gr G$ in the plane where \emph{none} of the edges intersect indicates that $\gr G$ is planar. If we instead allow \emph{some} intersections among the edges of $\gr G$, the drawing indicates that our graph is a string graph. The precise conditions classifying string graphs (in terms of edge intersections) is given in Theorem \ref{ob_2_eq}. Continuing our analogy, we consider the (strong) Hanani-Tutte Theorem (see \cite{Sch} for more details):
\begin{thm}[Hanani-Tutte] \label{HT_thm}
A graph $\gr G$ can be drawn in $\Rt$ such that every pair of disjoint edges cross an even number of times, if and only if $\gr G$ is planar.
\end{thm}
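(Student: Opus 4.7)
The plan is to combine Kuratowski's theorem with a mod-$2$ parity computation for $K_5$ and $K_{3,3}$. The $(\Leftarrow)$ direction is immediate: a planar embedding of $\gr G$ has no crossings at all, so in particular every pair of disjoint edges crosses evenly.

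For $(\Rightarrow)$, suppose $\gr G$ has a drawing $D$ in $\Rt$ in which every pair of vertex-disjoint edges crosses an even number of times---call such a drawing \emph{independently even}. If $\gr H \su \gr G$ is a subdivision of $K_5$ or $K_{3,3}$, restrict $D$ to $\gr H$ and then forget the subdivision vertices: each edge of $K_5$ (resp.\ $K_{3,3}$) becomes the concatenation of the $\gr H$-edges along the corresponding path, a single curve in $\Rt$. The number of crossings between two such concatenated curves coming from two disjoint edges of the Kuratowski graph equals the sum of the pairwise crossings of their constituent $\gr G$-edges, all of which are disjoint in $\gr G$ and hence cross evenly. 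So the induced drawing of $K_5$ (resp.\ $K_{3,3}$) is independently even, and it suffices to rule out such drawings for these two graphs.

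For that, I would attach to any general-position drawing $D$ of a graph $\gr H$ the $\Zt$-valued parity
\[
\nu(D) := \sum_{\{e,e'\}\text{ vertex-disjoint}} \textnormal{cr}_D(e,e') \pmod{2},
\]
and prove (i) $\nu(D)$ is independent of $D$ when $\gr H = K_5$ or $K_{3,3}$, and (ii) in both cases some drawing realizes $\nu = 1$. Item (ii) is a direct verification on the standard drawings with one crossing each, that crossing being between a vertex-disjoint pair of edges. For (i), I would connect any two general-position drawings by a generic homotopy and track $\nu$ through its codimension-one events: the creation or removal of a pair of crossings between two fixed edges changes $\nu$ by $\pm 2$; and the sliding of an edge $pq$ past a vertex $v \notin \{p, q\}$ toggles the parity of crossings of $pq$ with each of the $\deg(v)$ edges at $v$, hence alters $\nu$ by $\deg(v) - |\{p, q\} \cap N(v)|$ modulo $2$. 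A case check yields this as $4 - 2 \equiv 0$ for $K_5$ and $3 - 1 \equiv 0$ for $K_{3,3}$, so $\nu$ is genuinely invariant. Combining (i) and (ii) shows $K_5$ and $K_{3,3}$ admit no independently even drawing, and Kuratowski's theorem completes the proof.

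The main obstacle is invariance (i)---in particular the observation that the edge-past-vertex change vanishes modulo $2$ on account of the specific degree and neighborhood structure of $K_5$ and $K_{3,3}$, which is the arithmetic incarnation of the Van Kampen obstruction for Kuratowski graphs. One must also verify that the listed elementary moves really are exhaustive for a generic homotopy between drawings, handling degenerate intermediate configurations (three concurrent edges, an edge passing through a graph vertex, coinciding endpoints); this is routine but is the technical heart of the argument. Everything else (the Kuratowski reduction, the calculation (ii), and the combinatorics of restriction) is straightforward bookkeeping.
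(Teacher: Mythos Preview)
The paper does not actually prove Theorem~\ref{HT_thm}: it is quoted as background, with the nontrivial direction attributed to the literature (deducible from Kuratowski's theorem, or provable directly as in Sarkaria). Your proposal is a correct outline of the first of these routes---reduce via Kuratowski to $K_5$ and $K_{3,3}$, then show that the total disjoint-pair crossing parity $\nu$ is a homotopy invariant equal to $1$ on each---and the arithmetic you give for the edge-past-vertex move ($4-2\equiv 0$ for $K_5$, $3-1\equiv 0$ for $K_{3,3}$) is right. So there is no in-paper proof to compare against; your argument is sound and coincides with the approach the paper merely cites.
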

The ``if'' part is clear, as an embedding of $\gr G$ admits no crossings among edges. The converse may be deduced from Kuratowski's theorem \cite{Tut}, or even proved directly \cite{Sar}. The even crossing condition is equivalent to the existence of a solution to a system of linear equations over $\Zt$, described in cohomological terms as the vanishing of the (mod 2) Van Kampen obstruction. We introduce a modified Van Kampen obstruction that must vanish for every string graph $\gr G$ (Theorem \ref{ob_1_van}). This theorem essentially states that if $\gr G$ is a string graph, then there exists a drawing of $\gr G$ in the plane such that certain pairs of edges cross an even number of times. Unlike with graph planarity, the converse fails, as seen in Example \ref{Kratochvil_vanishing}.

\subsection{Notation for graphs}

A graph $\gr G=(V,E)$ is an abstract collection of vertices $V=V(\gr G)$ and edges $E=E(\gr G)$, where each edge is a set containing exactly two vertices. 

Abusing notation, when we use a graph as the domain or range of a function, such as $f:\gr G\to\Rt$, this $\gr G$ is to be understood as the topological space which is the geometric realization of the abstract graph $\gr G$ (a point for each vertex and a line segment for every edge, attached at the endpoints to the points corresponding to its vertices). We sometimes identify an edge $\alpha\in E$ with the corresponding line segment, so $f(\alpha)$ denotes the image of the line segment corresponding to $\alpha$. We also use the abbreviated notation $vw$ in place of $\set{v,w}$ (often an edge).

We define \emph{a subdivision} of $\gr G$ to be a graph $\gr H$ together with a homeomorphism $\gr G\to\gr H$ taking vertices of $\gr G$ to vertices of $\gr H$. If no edge of $\gr G$ is mapped to a single edge of $\gr H$, we say that $\gr H$ is a \emph{proper} subdivision. We let $\gr G^*$ denote the barycentric subdivision of $\gr G=(V,E)$:
$$V(\gr G^*)=V\cup E\quad,\quad E(\gr G^*)=\set{v\alpha\ :\ v\in\alpha\in E}.$$

We usually require that maps $f:\gr G\to\Rt$ are piecewise linear and in general position: there exists $\gr H$ which is a subdivision of $\gr G$, such that the composition of $f$ with the (inverse) homeomorphism $\gr H\to\gr G$ is linear (every edge is mapped to a line segment), and takes no three vertices of $\gr H$ to the same line. 

\section{Modified Van Kampen obstructions} 

We adopt the geometric definition of the (mod $2$) Van Kampen obstruction for graph planarity from \cite{Sar}, and also borrow notation from \cite[Appendix D]{MTW}. For a graph $\gr G=(V,E)$, we let


$$P_\D(\gr G)=\set{(\alpha,\beta)\in E^2\ :\ \alpha\cap\beta=\emp}$$
denote the collection of disjoint pairs of edges in $\gr G$. The Van Kampen obstruction can be defined as an equivariant cohomology class (with coefficients in $\bbZ/2$) of the deleted product
$$\gr G^2_P:=\bigcup_{(\alpha,\beta)\in P} F(\alpha)\times F(\beta),$$
where $P=P_\D(\gr G)$ and $F:\gr G\to\bbR^3$ is some embedding. The only change we introduce to the ordinary mod 2 Van Kampen obstruction, is that we allow for $P\su P_\D$ to be some symmetric subset. We do not assume familiarity with cohomology or the Van Kampen obstruction, readers familiar with the argument (using finger moves) that the obstruction must vanish for planar graphs may skip to Proposition \ref{vk_prop}, which is proved analogously to the more familiar case of the deleted product.

The set $P$ should be thought of as a collection of edge pairs whose intersections are forbidden: our goal will be to find some map $f:\gr G\to\Rt$ such that $f(\alpha)\cap f(\beta)=\emp$ for every $(\alpha,\beta)\in P$ (or prove that no such $f$ exists).

Since any pair $(\alpha,\beta)\in P$ involves disjoint edges, the intersection $f(\alpha)\cap f(\beta)$ is either empty or consists of finitely many points, all in the interior of both edges. Assuming $f$ is piecewise linear and in general position, we define a vector $o_f\in\Zt^P$ by
$$o_f(\alpha,\beta)=\left|f(\alpha)\cap f(\beta)\right|\mt.$$
If $f$ is not piecewise linear and in general position, we say that $o_f$ is undefined. Our next step is to define vectors corresponding to the Van Kampen ``finger moves''. For every vertex $u\in V$ and edge $\omega\in E$ such that $u\notin\omega$, define $\phi_{\omega,u}\in\Zt^P$ by

$$\phi_{\omega,u}\Par{\alpha,\beta}=\begin{cases}
1 & \text{if $\alpha=\omega$ and $u\in\beta$,}\\
1 & \text{if $\beta=\omega$ and $u\in\alpha$,}\\
0 & \text{otherwise}.
\end{cases}$$

The collection of all finger move vectors $\Phi=\set{\phi_{\omega,u}:u\in V,\ \omega\in E,\ u\notin \omega}$ defines an equivalence relation on $\Zt^P$, where two vectors $\phi,\psi$ are equivalent if $\phi-\psi\in\textnormal{span}_{\Zt}(\Phi)$.

Note that both $o_f$ and $\phi_{\omega,u}$ in fact depend on the set of pairs $P$, and we will denote them by $o_f^P$ and $\phi_{\omega,u}^P$ respectively if $P$ isn't clear from the context. We can now choose some function $f:\gr G\to\Rt$, and define:

\begin{df} \label{original_vk}
The Van Kampen obstruction of a graph $\gr G$ is defined as the equivalence class ${\vk o_{\gr G}=\eq{o_f^{P_\D(\gr G)}}}$.
\end{df}
And more generally:
\begin{df} \label{vk_def}
Let $\gr G=(V,E)$ be a graph, and let $P\su E^2$ be a symmetric collection of disjoint edges: any $(\alpha,\beta)\in P$ satisfies $(\beta,\alpha)\in P$ and $\alpha\cap\beta=\emp$.\\
The Van Kampen obstruction of $\gr G$ (modified for $P$) is the equivalence class
$$\vk o_{\gr G}(P)=\eq{o_f^P}.$$
We say that the obstruction \emph{vanishes} if $\vk o_{\gr G}(P)=\eq0$.
\end{df}

The definition implies that $\vk o_{\gr G}(P)$ doesn't depend on our choice of $f$. Indeed, the vectors equivalent to \emph{any} $o_f$ are precisely \emph{all} vectors of the form $o_g$. Explicitly:

\begin{lem} \label{vk_lem}
For $\gr G, P$ as above and $f:\gr G\to\Rt$, we have:
\begin{enumerate}
\item For every $\phi_{\omega,u}\in\Phi$, there exists $g:\gr G\to\Rt$, such that $o_g=o_f+\phi_{\omega,u}$,
\item For every $g:\gr G\to\Rt$, we have $o_f-o_g\in\textnormal{span}_{\Zt}(\Phi)$.
\end{enumerate}
\end{lem}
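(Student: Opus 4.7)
The plan is to use the \emph{finger move} construction for (1) and a generic-homotopy argument for (2), both adapted from the standard Van Kampen obstruction theory for graph embeddings (see e.g.\ \cite{Sar}). For part (1), given $u\in V$ and $\omega\in E$ with $u\notin\omega$, construct $g$ by a local modification of $f$ near $f(u)$. The key ingredient is a short piecewise linear arc $\eta\subset\Rt$ starting at $f(u)$, ending at a point $p$ on the opposite side of $f(\omega)$, crossing $f(\omega)$ transversally in a single interior point, and otherwise disjoint from $f(\gr G)$. Such an $\eta$ exists: one may leave $f(u)$ along an initial direction not tangent to any incident edge, travel inside $\Rt\setminus f(\gr G)$, and finally cross $f(\omega)$ once near a generic interior point. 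Define $g$ to agree with $f$ on every edge not containing $u$, to map $u$ to $p$, and on each edge $\beta\ni u$ to prepend $\eta^{-1}$ to $f(\beta)$; a small perturbation restores piecewise linearity and general position without changing mod-$2$ crossing counts. For a pair $(\alpha,\beta)\in P$, the crossings of $g(\alpha)$ and $g(\beta)$ differ from those of $f(\alpha)$ and $f(\beta)$ only by intersections of $\eta$ with the other edge, and these occur precisely when that other edge is $\omega$. Since $P$ consists of disjoint pairs, both edges of a pair cannot contain $u$; this checks $o_g-o_f=\phi_{\omega,u}$ entry by entry.

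For part (2), connect $f$ and $g$ by a piecewise linear homotopy $(f_t)_{t\in[0,1]}$---say, refining to a common subdivision and linearly interpolating each vertex---and perturb so that $f_t$ is in general position for all but finitely many critical times $t_1<\dots<t_k$. At each critical time exactly one codimension-$1$ degeneracy occurs: either (a) a vertex of the subdivision crosses the interior of an edge, (b) two edge segments become tangent at an interior point, or (c) three edge segments meet at a common interior point. Analyze the jump in $o_{f_t}$ across each event: in (b) the single affected pairwise crossing count changes by $\pm 2$, hence $0$ mod $2$; in (c) no pairwise count changes; in (a), if the moving vertex is a subdivision vertex interior to an edge $\beta$, the two segments of $\beta$ meeting there sweep across the other edge together, again yielding an even change; if the moving vertex is a genuine $u\in V$ passing through $f_t(\omega)$ with $u\notin\omega$, then for every $\beta\ni u$ with $(\beta,\omega)\in P$ the endpoint $f_t(u)$ drags $f_t(\beta)$ across $f_t(\omega)$ exactly once, so the jump equals $\phi_{\omega,u}\in\Phi$. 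Summing over critical times gives $o_g-o_f\in\textnormal{span}_{\Zt}(\Phi)$.

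The main obstacle is the parametrized general-position statement underlying (2): one must perturb the chosen homotopy to ensure that critical times are isolated, that the degeneracies (a)--(c) never coincide, and that no higher-codimension pathology appears. This is a routine transversality argument for piecewise linear maps of a $1$-complex into $\Rt$, essentially the one-parameter version of the general-position assumption on $f$ itself, but it has to be set up carefully; once it is in hand, both parts reduce to the explicit local pictures above. A minor subtlety in (1) is choosing the initial direction of $\eta$ at $f(u)$ so that $\eta$ meets $f(\gr G)$ only at $f(u)$ and the designated transverse crossing of $f(\omega)$.
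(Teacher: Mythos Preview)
Your argument for (2) is essentially the paper's: connect $f$ and $g$ by a generic homotopy and observe that the only events changing $o_{f_t}$ mod $2$ are genuine vertices of $\gr G$ crossing edges, each contributing a single $\phi_{\omega,u}$. You supply more detail on the codimension-$1$ degeneracies than the paper does, which is fine.

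For (1) you take a route dual to the paper's: the paper modifies the \emph{edge} $\omega$ by pushing a finger of $f(\omega)$ around $f(u)$, whereas you modify the edges incident to $u$ by dragging $f(u)$ across $f(\omega)$ along an arc $\eta$. Both ideas can be made to work, but your version has a gap. You assert that $\eta$ can be chosen ``otherwise disjoint from $f(\gr G)$'', i.e.\ meeting $f(\gr G)$ only at $f(u)$ and at the single transverse crossing with $f(\omega)$. This need not be possible: $f(u)$ may lie in a bounded face of the planar arrangement $f(\gr G)$ whose boundary consists entirely of (images of) edges other than $\omega$, and then every arc from $f(u)$ to $f(\omega)$ must cross one of those edges. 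Concretely, take $u$ joined by a single edge to a vertex of an embedded $4$-cycle not containing $u$, draw $f(u)$ inside the cycle, and place $f(\omega)$ outside. If $\eta$ crosses some $\alpha\neq\omega$ with $u\notin\alpha$, then every $\beta\ni u$ picks up an extra crossing with $\alpha$, and $o_g-o_f$ acquires an unwanted $\phi_{\alpha,u}$ summand.

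The repair is easy: either allow $\eta$ to meet other edges but arrange (by inserting small back-and-forth detours near each unwanted crossing) that every $\alpha\neq\omega$ is crossed an \emph{even} number of times, so the extra contributions vanish mod $2$; or use the paper's construction, which reroutes $f(\omega)$ along the boundary of a thin tube from an interior point of $f(\omega)$ to $f(u)$. In the latter picture any intermediate edge $\beta$ with $u\notin\beta\neq\omega$ is crossed exactly twice by the tube boundary (once entering, once leaving), so the parity issue never arises.
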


\begin{proof}
We achieve (1) by applying a finger move to $f$: take a thin tube around a path connecting an interior point of $f(\omega)$ to the vertex $u$, such that $u$ is the only vertex within the tube. Then we only modify the image of $\omega$ to go around the boundary of the tube. The resulting modified function $g$ has $o_g=o_f+\phi_{\omega,u}$ (see Figure \ref{finger_move_figure}).

\begin{center}
\refstepcounter{figure_num}\label{finger_move_figure}
\begin{tikzpicture}[scale=0.7]
\coordinate(w0) at (0,0);
\filldraw (w0) circle (2.86pt) node[left] {$1$};
\coordinate(w1) at (0,6);
\filldraw (w1) circle (2.86pt) node[left] {$2$};
\coordinate(v0) at (3.94,6);
\filldraw (v0) circle (2.86pt) node[right] {$3$};
\coordinate(v1) at (4.88,2.55);
\filldraw (v1) circle (2.86pt) node[above right] {$4$};
\coordinate(v2) at (6.87,1.62);
\filldraw (v2) circle (2.86pt) node[below] {$5$};
\coordinate(v3) at (2.32,0);
\filldraw (v3) circle (2.86pt) node[left] {$6$};

\draw (w0) -- (w1);
\draw (w1) -- (v1);
\draw (v0) -- (v1);
\draw (v2) -- (v1);
\draw (v3) -- (v1);
\draw (v0) -- (v3);

\begin{scope}[shift={(11.5,0)}]
\coordinate(w0) at (0,0);
\filldraw (w0) circle (2.86pt) node[left] {$1$};
\coordinate(w1) at (0,6);
\filldraw (w1) circle (2.86pt) node[left] {$2$};
\coordinate(v0) at (3.94,6);
\filldraw (v0) circle (2.86pt) node[right] {$3$};
\coordinate(v1) at (4.88,2.55);
\filldraw (v1) circle (2.86pt) node[above right] {\scriptsize $4$};
\coordinate(v2) at (6.87,1.62);
\filldraw (v2) circle (2.86pt) node[below] {$5$};
\coordinate(v3) at (2.32,0);
\filldraw (v3) circle (2.86pt) node[left] {$6$};

\draw[thick] (w0) -- (0,2.0) to [out=90,in=180] (0.4,2.4) -- (4.2,2.4) arc (192.46 : 527.54 : 0.7) -- (0.4,2.7) to [out=180,in=270] (0,3.1) -- (w1);

\draw (w1) -- (v1);
\draw (v0) -- (v1);
\draw (v2) -- (v1);
\draw (v3) -- (v1);
\draw (v0) -- (v3);



\end{scope}
\end{tikzpicture}
\footnotesize
\\Figure \arabic{figure_num}: a map obtained from a linear $f:\gr G\to\Rt$ (left) by applying a finger move taking $\omega=12$ around $u=4$ (right). Note that we ignore intersections of $\omega$ with edges $\beta$ satisfying $(\omega,\beta)\notin P$ (such as $\beta=24$ in this example, as follows from $P\su P_\D$). Self intersections of $\omega$ may also be created and ignored for the same reason. Intersections of $\omega$ with edges $\beta$ such that $u\notin\beta\neq\omega$ (such as $\beta=36$ above) are created in pairs and thus have no effect on $o_f$.
\end{center}

For (2), we say that a homotopy $h_t:\gr G\to\Rt$ is generic, if $o_{h_t}$ is defined for all but finitely many $t\in[0,1]$. In this case, as $t$ increases from $0$ to $1$, the only occasion where $o_{h_t}$ is modified is when a vertex $u$ passes through an edge $\omega$, and this modifies $o_{h_t}$ by adding $\phi_{\omega,u}$. For example, if we take $f=h_0$ to be a linear map of the graph with vertices positioned as in Figure \ref{finger_move_figure} (left), and as $t$ increases modify our function by gradually pulling vertex $4$ to the left, such that at time $t_1>0$ the vertex passes through the edge $36$, and at time $t_2\in(t_1,1)$ the vertex passes through the edge $12$. Then we have $o_{h_t}=o_f+\phi_{36,4}$ for $t\in(t_1,t_2)$, and $o_{h_t}=o_f+\phi_{36,4}+\phi_{12,4}$ for $t>t_2$. 
Given any two (general position piecewise linear) functions $f,g:\gr G\to\Rt$, we can easily find such a generic homotopy between them, completing our proof.
\end{proof}

This clearly implies that $\vk o_{\gr G}$ vanishes if $\gr G$ is planar. For modified obstructions, we deduce from the lemma:

\begin{prop} \label{vk_prop}
Let $\gr G=(V,E)$ be a graph, $P\su E^2$ a symmetric collection of disjoint edges. Then $\vk o_{\gr G}(P)$ vanishes if and only if there exists $f:\gr G\to\Rt$, such that $\left|f(\alpha)\cap f(\beta)\right|$ is even for every $(\alpha,\beta)\in P$.
\end{prop}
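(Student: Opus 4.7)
The plan is to use Lemma \ref{vk_lem} to convert the algebraic statement ``$\vk o_{\gr G}(P)$ vanishes'' into the geometric statement about even intersections. Since $\vk o_{\gr G}(P) = [o_f^P]$ is independent of the chosen $f:\gr G\to\Rt$ (as remarked after Definition \ref{vk_def} and justified by Lemma \ref{vk_lem}), the task is essentially to move between ``there exists $f$ with $o_f^P = 0$'' and ``$o_f^P \in \textnormal{span}_{\Zt}(\Phi)$ for every $f$.''

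First I would handle the easy direction: if some $g:\gr G\to\Rt$ satisfies $|g(\alpha)\cap g(\beta)|$ even for every $(\alpha,\beta)\in P$, then by definition $o_g^P = 0 \in \Zt^P$, so $\vk o_{\gr G}(P) = [o_g^P] = [0]$ vanishes.

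For the other direction, assume $\vk o_{\gr G}(P)$ vanishes. Pick any piecewise linear general position $f:\gr G\to\Rt$. By assumption $[o_f^P] = [0]$, so $o_f^P \in \textnormal{span}_{\Zt}(\Phi)$, meaning we can write
$$o_f^P = \sum_{i=1}^n \phi_{\omega_i,u_i}^P$$
for some finite list of finger move vectors. Now I would apply Lemma \ref{vk_lem}(1) inductively: starting from $f_0 = f$, at step $i$ we replace $f_{i-1}$ by the function $f_i$ obtained from a finger move taking $\omega_i$ around $u_i$, so that $o_{f_i}^P = o_{f_{i-1}}^P + \phi_{\omega_i,u_i}^P$. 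After all $n$ steps the function $g := f_n$ satisfies
$$o_g^P = o_f^P + \sum_{i=1}^n \phi_{\omega_i,u_i}^P = 2\, o_f^P = 0 \pmod{2},$$
so $|g(\alpha)\cap g(\beta)|$ is even for every $(\alpha,\beta)\in P$, as desired.

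There is no real obstacle here: both Lemma \ref{vk_lem}(1) (geometric realizability of any finger move) and the well-definedness of $[o_f^P]$ from Lemma \ref{vk_lem}(2) are already in hand, and the proposition is essentially the combinatorial repackaging of those two facts. The only point worth checking carefully is that the finger moves can be composed without issue, which is immediate because each finger move is a local modification performed in a thin tube that can be chosen disjoint from the support of the previous modifications.
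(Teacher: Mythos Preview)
Your proposal is correct and is precisely the argument the paper has in mind: the proposition is stated without proof as an immediate consequence of Lemma~\ref{vk_lem}, and you have simply spelled out that deduction, using part~(1) iteratively to realize a given $\Zt$-combination of finger moves geometrically.
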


\section{The string graph obstruction} 

We are now ready to define the string obstruction of a graph:
\begin{df} \label{first_ob}
For a graph $\gr G=(V,E)$, denote
$$P=P_s(\gr G)=\set{(\alpha,\beta)\in E^2\ :\ vw\notin E\text{ for every } v\in\alpha,w\in\beta}$$
and define the \emph{string obstruction} of $\gr G$ as 
$$\vk o^s_{\gr G}=\vk o_{\gr G}(P).$$
\end{df}

We claim:

\begin{thm} \label{ob_1_van}
Let $\gr G$ be a string graph. Then $\vk o^s_\gr{G}$ vanishes.
\end{thm}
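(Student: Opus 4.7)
The plan is to reduce the theorem, via Proposition \ref{vk_prop}, to producing a piecewise linear general position map $f : \gr G \to \Rt$ such that $|f(\alpha) \cap f(\beta)|$ is even for every $(\alpha,\beta) \in P_s(\gr G)$. In fact I will construct $f$ achieving the stronger property $f(\alpha) \cap f(\beta) = \emp$ on all such pairs, using a string representation of $\gr G$ as raw material.

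Start from any string representation $\set{\gamma_v : v \in V}$ of $\gr G$. For every vertex $v$, choose a base point $p_v \in \gamma_v$, and for every edge $vw \in E$ choose an intersection point $q_{vw} \in \gamma_v \cap \gamma_w$, which exists because $vw$ is an edge of $\gr G$. Using the continuous parametrizations $[0,1] \to \Rt$ of the curves $\gamma_v$, define a continuous map $F : \gr G \to \Rt$ by $F(v) = p_v$ and by drawing each edge $vw$ as the concatenation of the arc of $\gamma_v$ from $p_v$ to $q_{vw}$ with the arc of $\gamma_w$ from $q_{vw}$ to $p_w$. By construction, $F(\alpha) \su \gamma_{v_1} \cup \gamma_{v_2}$ whenever $\alpha = v_1v_2$.

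Now consider any $(\alpha,\beta) \in P_s(\gr G)$ with $\alpha = v_1v_2$ and $\beta = w_1w_2$. The definition of $P_s$ gives that none of the four pairs $v_iw_j$ is an edge of $\gr G$, hence $\gamma_{v_i} \cap \gamma_{w_j} = \emp$ for all $i, j$ in the string representation. Therefore
$$F(\alpha) \cap F(\beta) \su \Par{\gamma_{v_1} \cup \gamma_{v_2}} \cap \Par{\gamma_{w_1} \cup \gamma_{w_2}} = \emp.$$
Since $P_s$ is finite and each $F(\alpha)$ is compact, the minimum distance $\delta$ between $F(\alpha)$ and $F(\beta)$ over all $(\alpha,\beta) \in P_s$ is attained and strictly positive.

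The final step is to replace $F$ by a piecewise linear general position approximation $f : \gr G \to \Rt$ with $\sup_{x \in \gr G} \|f(x) - F(x)\| < \delta/2$; such an $f$ exists via a standard simplicial approximation on the finite $1$-complex $\gr G$ followed by a small perturbation of the vertex images to achieve general position. The uniform bound then forces $f(\alpha) \cap f(\beta) = \emp$ for every $(\alpha,\beta) \in P_s$, so $o_f^{P_s} = 0$ and Proposition \ref{vk_prop} closes the argument. The main technical point is the approximation step: the PL perturbation must be fine enough not to introduce spurious intersections on pairs in $P_s$, and what makes this possible is precisely the finiteness of $\gr G$ (and of $P_s$), which allows a single $\delta/2$ to work uniformly across all forbidden pairs simultaneously.
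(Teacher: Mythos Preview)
Your argument is correct. The construction of $F$ with $F(\alpha)\subseteq\gamma_{v_1}\cup\gamma_{v_2}$ for $\alpha=v_1v_2$, together with the observation that $(\alpha,\beta)\in P_s(\gr G)$ forces all four pairs $\gamma_{v_i}\cap\gamma_{w_j}$ to be empty, gives $F(\alpha)\cap F(\beta)=\emp$ outright; the compactness/PL-approximation step to pass to a map with $o_f$ defined is standard and handled carefully (more carefully, in fact, than the paper bothers to be in the analogous place).

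The route, however, differs from the paper's. The paper does not construct a map of $\gr G$ directly. Instead it first proves the characterization Theorem~\ref{ob_2_eq}: from a string representation one builds a map $g:\gr G^*\to\Rt$ of the \emph{barycentric subdivision} in which each half-edge $v\alpha$ lands inside the single curve $\gamma_v$, so that forbidden pairs in $P_{\sd}(\gr G)$ are disjoint. This yields vanishing of the subdivided obstruction $\vk o_{\gr G}^{\sd}$, and then Lemma~\ref{ob_eq} (a finger-move argument on $\gr G^*$) is invoked to conclude that $\vk o_{\gr G}^s$ vanishes as well. Your proof short-circuits this: by letting each full edge live in the union of \emph{two} curves rather than insisting that half-edges live in one, you get the $P_s$-disjointness on $\gr G$ immediately and never need $\gr G^*$ or Lemma~\ref{ob_eq}. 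The paper's detour is not wasted, though---Theorem~\ref{ob_2_eq} is an exact characterization of string graphs (not just a necessary condition), and Lemma~\ref{ob_eq} shows the two obstructions carry identical information, both of which are of independent interest. For the bare statement of Theorem~\ref{ob_1_van}, your direct argument is the shorter path.
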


The proof will be postponed until after establishing Lemma \ref{ob_eq}, and first we consider some examples:

\begin{cor}
The Heawood graph (see Figure \ref{heawood_figure} below) is not a string graph.
\end{cor}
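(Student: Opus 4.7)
Let $\gr H$ denote the Heawood graph. I argue by the contrapositive of Theorem \ref{ob_1_van}: it suffices to show that $\vk o^s_{\gr H}$ does not vanish, which by Proposition \ref{vk_prop} amounts to ruling out every map $f: \gr H \to \Rt$ in which $|f(\alpha)\cap f(\beta)|$ is even for every pair $(\alpha,\beta)\in P_s(\gr H)$.

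My preferred strategy is a reduction to the Hanani--Tutte theorem (Theorem \ref{HT_thm}). I would look for a subdivision $\gr K$ of $K_{3,3}$ sitting inside $\gr H$ with the following ``buffer'' property: whenever $e,e'$ are vertex-disjoint edges of $K_{3,3}$, every pair consisting of one micro-edge from the path of $\gr K$ realizing $e$ and one micro-edge from the path realizing $e'$ belongs to $P_s(\gr H)$. Given such a $\gr K$, a hypothetical witness $f$ for vanishing of $\vk o^s_{\gr H}$ would restrict, via the homeomorphism $K_{3,3}\to\gr K$, to a drawing of $K_{3,3}$ in which every pair of vertex-disjoint edges crosses an even number of times -- each macro-crossing count being a $\Zt$-sum of even micro-crossing counts. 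Theorem \ref{HT_thm} would then contradict the non-planarity of $K_{3,3}$. To actually build $\gr K$, I would exploit the Fano-plane incidence structure underlying $\gr H$, taking three non-collinear Fano points and three non-concurrent Fano lines as the six branch vertices of $\gr K$, and using the large (flag-transitive) automorphism group of $\gr H$ to reduce verification of the buffer property to a handful of orbits.

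The main obstacle is fitting $\gr K$ into the Heawood graph's very tight structure (14 vertices, 21 edges, $3$-regular, girth $6$): the buffer property is stringent, and at least some macro-edges will have to be realized by paths of length $\geq 2$, cutting into a vertex budget that has little slack. If the direct combinatorial construction of $\gr K$ turns out to be obstructed, my fallback is a direct cohomological calculation: choose a simple general-position piecewise linear drawing $f$ of $\gr H$ (for instance one arising from the canonical toroidal embedding of $\gr H$), compute $o_f^{P_s}\in\Zt^{P_s(\gr H)}$ explicitly, and exhibit a $\Zt$-linear functional on $\Zt^{P_s(\gr H)}$ that vanishes on every finger-move vector $\phi_{\omega,u}^{P_s}$ yet evaluates to $1$ on $o_f^{P_s}$ -- again using the automorphism group of $\gr H$ to cut the linear-algebra problem down to tractable size.
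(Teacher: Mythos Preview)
Your primary route is blocked by a simple count. Suppose $\gr K\subseteq\gr H$ is a subdivision of $K_{3,3}$ with branch vertices $a_1,a_2,a_3,b_1,b_2,b_3$ and paths $P_{ij}$ (from $a_i$ to $b_j$) satisfying your buffer property. Any two branch vertices lie on vertex-disjoint macro-edges (e.g.\ $a_1\in P_{11}$ and $a_2\in P_{22}$; or $a_1\in P_{12}$ and $b_1\in P_{21}$), so the buffer condition forces every pair of branch vertices to be non-adjacent in $\gr H$. Hence each of the nine $P_{ij}$ has length at least $2$ and contributes at least one internal vertex; since these paths are internally vertex-disjoint, $\gr K$ occupies at least $6+9=15$ vertices---one more than $\gr H$ has. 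The same count rules out a buffered $K_5$ subdivision ($5+10=15$). So no buffered Kuratowski subgraph exists in $\gr H$, and you are forced onto your fallback.

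That fallback is precisely the paper's proof. The paper takes $f$ to be the drawing in Figure~\ref{heawood_figure} and uses as the separating functional the total parity
\[
\psi\ \longmapsto\ \sum_{\{\alpha,\beta\}\in P_{\text{set}}}\psi(\alpha,\beta)\in\Zt.
\]
This $o_f$ is odd: exactly seven crossings in the figure (the innermost chord crossings such as $f(1a)\cap f(3c)$) involve pairs in $P_s(\gr H)$. For the finger moves, the key structural fact is that for each edge $\omega$ of $\gr H$, the edges $\beta$ with $(\omega,\beta)\in P_s(\gr H)$ form an $8$-cycle; consequently every $\phi_{\omega,u}$ is supported on either $0$ or $2$ unordered pairs and is even. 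The dihedral symmetry of the figure then reduces the verification to three representative edges $\omega\in\{12,1a,1e\}$, exactly the kind of symmetry reduction you proposed.
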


\begin{center}
\refstepcounter{figure_num}\label{heawood_figure}
\begin{tikzpicture}

\coordinate (v0) at (360 * 0 / 14:2);
\filldraw (v0) circle (2pt);
\coordinate (v1) at (360 * 1 / 14:2);
\filldraw (v1) circle (2pt);
\coordinate (v2) at (360 * 2 / 14:2);
\filldraw (v2) circle (2pt);
\coordinate (v3) at (360 * 3 / 14:2);
\filldraw (v3) circle (2pt);
\coordinate (v4) at (360 * 4 / 14:2);
\filldraw (v4) circle (2pt);
\coordinate (v5) at (360 * 5 / 14:2);
\filldraw (v5) circle (2pt);
\coordinate (v6) at (360 * 6 / 14:2);
\filldraw (v6) circle (2pt);
\coordinate (v7) at (360 * 7 / 14:2);
\filldraw (v7) circle (2pt);
\coordinate (v8) at (360 * 8 / 14:2);
\filldraw (v8) circle (2pt);
\coordinate (v9) at (360 * 9 / 14:2);
\filldraw (v9) circle (2pt);
\coordinate (v10) at (360 * 10 / 14:2);
\filldraw (v10) circle (2pt);
\coordinate (v11) at (360 * 11 / 14:2);
\filldraw (v11) circle (2pt);
\coordinate (v12) at (360 * 12 / 14:2);
\filldraw (v12) circle (2pt);
\coordinate (v13) at (360 * 13 / 14:2);
\filldraw (v13) circle (2pt);

\node at (360 * 0 / 14:2.3) {1};
\node at (360 * 1 / 14:2.3) {2};
\node at (360 * 2 / 14:2.3) {3};
\node at (360 * 3 / 14:2.3) {4};
\node at (360 * 4 / 14:2.3) {5};
\node at (360 * 5 / 14:2.3) {6};
\node at (360 * 6 / 14:2.3) {7};
\node at (360 * 7 / 14:2.3) {8};
\node at (360 * 8 / 14:2.3) {9};
\node at (360 * 9 / 14:2.3) {a};
\node at (360 * 10 / 14:2.3) {b};
\node at (360 * 11 / 14:2.3) {c};
\node at (360 * 12 / 14:2.3) {d};
\node at (360 * 13 / 14:2.3) {e};

\draw (v0) -- (v1) -- (v2) -- (v3) -- (v4) -- (v5) -- (v6) -- (v7) -- (v8) -- (v9) -- (v10) -- (v11) -- (v12) -- (v13) -- (v0);

\draw (v1) -- (v6);
\draw (v2) -- (v11);
\draw (v3) -- (v8);
\draw (v4) -- (v13);
\draw (v5) -- (v10);
\draw (v7) -- (v12);
\draw (v9) -- (v0);

\end{tikzpicture}
\footnotesize\\
Figure \arabic{figure_num}: the Heawood graph.
\end{center}

\begin{proof}
Let $\gr G$ denote the Heawood graph and $P=P_s(\gr G)$. 
Denote
$$P_\text{set}=\set{\set{\alpha,\beta}\ :\ (\alpha,\beta)\in P},$$
which is well defined because $P$ is symmetric. Also note that if $\psi\in\Zt^P$ is either a finger move vector or some $o_f$, then $\psi(\alpha,\beta)=\psi(\beta,\alpha)$ for every $(\alpha,\beta)\in P$. We may therefore say that such $\psi$ is \emph{even}, if
$$\sum_{(\alpha,\beta)\in P_\text{set}}\psi(\alpha,\beta)=0\in\Zt,$$
and otherwise say that $\psi$ is \emph{odd}. Furthermore, for any $f:\gr G\to\Rt$ we have
$$\sum_{(\alpha,\beta)\in P_\text{set}} o_f(\alpha,\beta)=\sum_{(\alpha,\beta)\in P_\text{set}}\left|f(\alpha)\cap f(\beta)\right|\mt.$$
For the function $f$ displayed in Figure \ref{heawood_figure}, we obtain that $o_f$ is odd: there are $7$ crossings in total among edge pairs belonging to $P_\text{set}$ - those which are closest to the center of the figure, such as $f(1a)\cap f(3c)$. The other $7$ crossings such as $f(1a)\cap f(5e)$ involve pairs of edges not belonging to $P_\text{set}$.

Our claim will now follow by proving that every finger move for $\gr G$ is even: this implies that $o_g$ is odd for every $g:\gr G\to\Rt$, therefore $\vk o_{\gr G}^s$ doesn't vanish and Theorem \ref{ob_1_van} implies that $\gr G$ is not a string graph.

Consider a finger move vector $\phi_{\omega,u}$, where $\omega=12$ and $u\notin\omega$. If $\phi_{\omega,u}(\alpha,\beta)=1$ then one of the edges $\alpha,\beta$ equals $\omega$ and the other edge contains $u$ (w.l.o.g. $\alpha=\omega$ and $u\in\beta$). Furthermore, $(\omega,\beta)\in P$ only for edges $\beta$ whose vertices are not adjacent to either of the vertices of $\omega$, in this case the vertices of $\beta$ must be two of $\set{4,5,6,8,9,b,c,d}$. These vertices form a cycle of length $8$ in $\gr G$, in the cyclic order $456bcd89$. We may now deduce that $\phi_{\omega,u}$ must be even: if $u\in\set{3,7,a,e}$ then $\phi_{\omega,u}$ is the zero vector. Otherwise $u\in\set{4,5,6,8,9,b,c,d}$ and there are exactly two vertices $v$ for which $\beta=uv$ has $\phi_{\omega,u}(\omega,\beta)=1$ (those $v$ which are adjacent to $u$ in the cycle), and again $\phi_{\omega,u}$ is even.

For other edges $\omega$ the proof is analogous: if $\omega=1a$ then non-zero values are obtained when $u$ and $\beta$ are part of the cycle $345678dc$, and if $\omega=1e$ then non-zero values are obtained when $u$ and $\beta$ are part of the cycle $349876bc$. The proof for any other $\omega$ easily follows from the previous cases by rotations and reflections fixing Figure \ref{heawood_figure}.
\end{proof}

There are many more cases of graphs for which the string obstruction doesn't vanish, and Theorem \ref{ob_1_van} gives us a unified approach in proving that they aren't string graphs. For example, if $\gr H$ is a proper barycentric subdivision of a non-planar graph $\gr G$, the non-vanishing of $\vk o_{\gr H}^s$ follows from Hananni-Tutte (Theorem \ref{HT_thm}): every map $f:\gr H\to\Rt$ determines a map $\gr G\to\Rt$, so there are two disjoint $\gr G$-edges which intersect an odd number of times. In $\gr H$ these are divided into pairs of edges in $P_s(\gr G)$, so at least one such pair must intersect an odd number of times, thus $o_f$ is non-zero.

In \cite{GKK}, Goljan, Kratochv\'il, and Ku\v cera explored string graphs and gave a number of examples of small non-string graphs, including two with $12$ vertices \cite[Example 3.9]{GKK}, also proving that this is the minimal number of vertices in any string graphs. We won't include all such examples and proofs - for any graph $\gr G$, determining if the string graph obstruction vanishes can be done by solving a linear system of equations over $\Zt$: $\vk o_{\gr G}^s$ vanishes if and only if $o_f\in\text{span}_{\Zt}(\Phi)$ for some $f:\gr G\to\Rt$. This paper also provides an example in which the converse of Theorem \ref{ob_1_van} doesn't hold:

\begin{exmp} \label{Kratochvil_vanishing}
Consider the graph GP defined \cite{GKK}, and an important infinite class of non-string graphs denoted $C*\overline C_n$ for every $n\geq 5$ \cite{Kra1}. The two functions $f:\gr G\to\Rt$ displayed in Figure \ref{Kratochvil_figure} for $\gr G=GP$ and $\gr G=C*\overline C_6$ have $o_f=0$, as all crossings involve pairs of edges not in $P_s(\gr G)$.

\begin{center}
\refstepcounter{figure_num}\label{Kratochvil_figure}
\begin{tikzpicture}

\coordinate (v1) at (90 - 72 * 0:1.4);
\filldraw (v1) circle (2pt);
\coordinate (v2) at (90 - 72 * 1:1.4);
\filldraw (v2) circle (2pt);
\coordinate (v3) at (90 - 72 * 2:1.4);
\filldraw (v3) circle (2pt);
\coordinate (v4) at (90 - 72 * 3:1.4);
\filldraw (v4) circle (2pt);
\coordinate (v5) at (90 - 72 * 4:1.4);
\filldraw (v5) circle (2pt);

\coordinate (v6) at (90 - 72 * 0:2.5);
\filldraw (v6) circle (2pt);
\coordinate (v7) at (90 - 72 * 1:2.5);
\filldraw (v7) circle (2pt);
\coordinate (v8) at (90 - 72 * 2:2.5);
\filldraw (v8) circle (2pt);
\coordinate (v9) at (90 - 72 * 3:2.5);
\filldraw (v9) circle (2pt);
\coordinate (v10) at (90 - 72 * 4:2.5);
\filldraw (v10) circle (2pt);

\coordinate (v11) at ($(v6)!0.5!(v7)$);
\filldraw (v11) circle (2pt);
\coordinate (v12) at ($(v7)!0.5!(v8)$);
\filldraw (v12) circle (2pt);
\coordinate (v13) at ($(v8)!0.5!(v9)$);
\filldraw (v13) circle (2pt);
\coordinate (v14) at ($(v9)!0.5!(v10)$);
\filldraw (v14) circle (2pt);
\coordinate (v15) at ($(v6)!0.5!(v10)$);
\filldraw (v15) circle (2pt);
\coordinate (v16) at ($(v2)!0.5!(v7)$);
\filldraw (v16) circle (2pt);
\coordinate (v17) at ($(v4)!0.5!(v9)$);
\filldraw (v17) circle (2pt);
\coordinate (v18) at ($(v5)!0.5!(v10)$);
\filldraw (v18) circle (2pt);

\draw (v1) -- (v3) -- (v5) -- (v2) -- (v4) -- (v1);
\draw (v6) -- (v7) -- (v8) -- (v9) -- (v10) -- (v6);
\draw (v1) -- (v6);
\draw (v2) -- (v7);
\draw (v3) -- (v8);
\draw (v4) -- (v9);
\draw (v5) -- (v10);

\begin{scope}[shift={(7,0)}]
\coordinate (v1) at (120 - 60 * 0:1.4);
\filldraw (v1) circle (2pt);
\coordinate (w1) at (120 - 60 * 0:2.5);
\filldraw (w1) circle (2pt);
\coordinate (v2) at (120 - 60 * 1:1.4);
\filldraw (v2) circle (2pt);
\coordinate (w2) at (120 - 60 * 1:2.5);
\filldraw (w2) circle (2pt);
\coordinate (v3) at (120 - 60 * 2:1.4);
\filldraw (v3) circle (2pt);
\coordinate (w3) at (120 - 60 * 2:2.5);
\filldraw (w3) circle (2pt);
\coordinate (v4) at (120 - 60 * 3:1.4);
\filldraw (v4) circle (2pt);
\coordinate (w4) at (120 - 60 * 3:2.5);
\filldraw (w4) circle (2pt);
\coordinate (v5) at (120 - 60 * 4:1.4);
\filldraw (v5) circle (2pt);
\coordinate (w5) at (120 - 60 * 4:2.5);
\filldraw (w5) circle (2pt);
\coordinate (v6) at (120 - 60 * 5:1.4);
\filldraw (v6) circle (2pt);
\coordinate (w6) at (120 - 60 * 5:2.5);
\filldraw (w6) circle (2pt);

\coordinate (z1) at ($(w1)!0.5!(w2)$);
\filldraw (z1) circle (2pt);
\coordinate (p1) at ($(v1)!0.5!(w1)$);
\filldraw (p1) circle (2pt);
\coordinate (z2) at ($(w2)!0.5!(w3)$);
\filldraw (z2) circle (2pt);
\coordinate (p2) at ($(v2)!0.5!(w2)$);
\filldraw (p2) circle (2pt);
\coordinate (z3) at ($(w3)!0.5!(w4)$);
\filldraw (z3) circle (2pt);
\coordinate (p3) at ($(v3)!0.5!(w3)$);
\filldraw (p3) circle (2pt);
\coordinate (z4) at ($(w4)!0.5!(w5)$);
\filldraw (z4) circle (2pt);
\coordinate (p4) at ($(v4)!0.5!(w4)$);
\filldraw (p4) circle (2pt);
\coordinate (z5) at ($(w5)!0.5!(w6)$);
\filldraw (z5) circle (2pt);
\coordinate (p5) at ($(v5)!0.5!(w5)$);
\filldraw (p5) circle (2pt);
\coordinate (z6) at ($(w6)!0.5!(w1)$);
\filldraw (z6) circle (2pt);
\coordinate (p6) at ($(v6)!0.5!(w6)$);
\filldraw (p6) circle (2pt);

\draw (v1) -- (v3);
\draw (v1) -- (v5);
\draw (v2) -- (v4);
\draw (v2) -- (v6);
\draw (v3) -- (v5);
\draw (v4) -- (v6);
\draw (v1) to [bend left=15] (v4);
\draw (v3) to [bend left=15] (v6);
\draw (v5) to [bend left=15] (v2);

\draw (w1) -- (w2) -- (w3) -- (w4) -- (w5) -- (w6) -- (w1);
\draw (v1) -- (w1);
\draw (v2) -- (w2);
\draw (v3) -- (w3);
\draw (v4) -- (w4);
\draw (v5) -- (w5);
\draw (v6) -- (w6);
\end{scope}
\end{tikzpicture}
\footnotesize\\
\medskip
Figure \arabic{figure_num}: the graphs $GP$ (left) and $C*\overline C_6$ (right).
\end{center}

A similar $C*\overline C_n\to\Rt$ exists for every $n\geq 5$, proving that the string obstruction fails to classify infinitely many non-string graphs.
\end{exmp}

\subsection{The obstruction via barycentric subdivisions}

\comment{
\begin{df}
$$P_s(\gr G)=\set{(\alpha,\beta)\in E^2\ :\ vw\notin E\text{ for every } v\in\alpha,w\in\beta},$$
and that every edge of $\sd(\gr G)$ is of the form $v\alpha$, for some $v\in\alpha\in E$. Next, denote
$$P=P_f(\gr G)=\set{(v\alpha,w\beta)\in E\Par{\sd(\gr G)}^2\ :\ v\neq w \text{ and } vw\notin E},$$
and define the \emph{fine string obstruction} of $\gr G$ as
$$\vk o_{\gr G}^f=\vk o_{\sd(\gr G)}(P).$$
\end{df}}

\begin{df}
Let $\gr G=(V,E)$ be a graph, and recall that $\gr G^*$ denotes the barycentric subdivision of $\gr G$: the edges of $\gr G^*$ are $v\alpha$ where $v\in\alpha\in E$. Denote
$$P=P_{\sd}(\gr G)=\set{(v\alpha,w\beta)\in E\Par{\gr G^*}^2\ :\ v\neq w \text{ and } vw\notin E},$$
and define the \emph{subdivided string obstruction} of $\gr G$ as
$$\vk o_{\gr G}^{\sd}=\vk o_{\gr G^*}(P).$$
\end{df}
Note that $\vk o_{\gr G}^{\sd}$ is in fact a Van Kampen obstruction of $\gr G^*$ modified for $P$ (in the sense of Definition \ref{vk_def}). We associate it with $\gr G$, however, because it vanishes when $\gr G$ is a string graph. In fact, $P_{\sd}(\gr G)$ contains precisely the pairs of edges whose intersections are forbidden if $\gr G$ is a string graph, in the following sense:

\begin{thm} \label{ob_2_eq}
Let $\gr G=(V,E)$ be a graph. Then $\gr G$ is a string graph if and only if there exists a drawing of the barycentric subdivision $g:\gr G^*\to\Rt$, such that
$$g(v\alpha)\cap g(w\beta)=\emp$$
for every $(v\alpha,w\beta)\in P_{\sd}(\gr G)$.
\end{thm}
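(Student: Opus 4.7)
The plan is to establish a natural correspondence between string representations of $\gr G$ and drawings of $\gr G^*$: the curve $\gamma_v$ representing $v$ decomposes into arcs $g(v\alpha)$, one for each edge $\alpha\in E$ with $v\in\alpha$, each joining $g(v)$ to a point $g(\alpha)$ that witnesses an intersection $\gamma_v\cap\gamma_w$ with $\alpha=vw$. With this picture, both directions are essentially immediate.

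For the $(\Rightarrow)$ direction, start with a string representation $\set{\gamma_v:v\in V}$ of $\gr G$. For each edge $\alpha=vw\in E$, pick a point $p_\alpha\in\gamma_v\cap\gamma_w$ and set $g(\alpha)=p_\alpha$; for each $v\in V$ choose $g(v)\in\gamma_v$ distinct from every $p_\alpha$; then define $g(v\alpha)$ to be a path inside $\gamma_v$ from $g(v)$ to $p_\alpha$ (using path-connectedness of $\gamma_v$). Since $g(v\alpha)\su\gamma_v$, for every $(v\alpha,w\beta)\in P_{\sd}(\gr G)$ we have $g(v\alpha)\cap g(w\beta)\su\gamma_v\cap\gamma_w=\emp$, as $vw\notin E$ forces the two curves to be disjoint.

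For the $(\Leftarrow)$ direction, given a drawing $g:\gr G^*\to\Rt$ with the stated property, define
$$\gamma_v=\bigcup_{v\in\alpha\in E}g(v\alpha)\qquad\text{for each }v\in V.$$
Each $\gamma_v$ is a finite union of arcs all meeting at $g(v)$, hence is the continuous image of $[0,1]$: parameterize it by traversing each arc out from $g(v)$ and back, concatenating over the edges $\alpha\ni v$. If $\alpha=vw\in E$, the point $g(\alpha)$ lies in both $g(v\alpha)$ and $g(w\alpha)$, so $\gamma_v\cap\gamma_w\neq\emp$. If $v\neq w$ and $vw\notin E$, then every relevant pair $(v\alpha,w\beta)$ lies in $P_{\sd}(\gr G)$, so by hypothesis
$$\gamma_v\cap\gamma_w=\bigcup_{v\in\alpha,\,w\in\beta}g(v\alpha)\cap g(w\beta)=\emp.$$
Thus $\set{\gamma_v}$ realizes $\gr G$ as a string graph.

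The one minor technical obstacle is in the forward direction: the map $g$ produced above is continuous but need not be piecewise linear and in general position, as the convention for drawings requires. This is handled by standard approximation---replace each chosen sub-arc of $\gamma_v$ by a piecewise linear arc in a sufficiently small neighborhood of itself, then perturb vertex images so no three lie on a line. Since for $vw\notin E$ the compact disjoint sets $\gamma_v$ and $\gamma_w$ are at positive distance, sufficiently small neighborhoods of $g(v\alpha)\su\gamma_v$ and $g(w\beta)\su\gamma_w$ remain disjoint, so the approximation does not create new intersections among pairs in $P_{\sd}(\gr G)$.
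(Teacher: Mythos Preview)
Your proof is correct and follows essentially the same route as the paper: in both directions you use the same constructions---arcs within $\gamma_v$ to build $g(v\alpha)$ for the forward direction, and the star $\gamma_v=\bigcup_{\alpha\ni v}g(v\alpha)$ (the paper writes this as $g(\gr T_v)$) for the reverse. Your extra paragraph on piecewise-linear approximation addresses a technicality the paper simply passes over.
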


\begin{proof}
Suppose $g:\gr G^*\to\Rt$ is as above. For every $v\in E$ let $\gr T_v$ denote the subgraph of $\gr G^*$ induced on the collection (of $\gr G^*$-vertices) $\set{v}\cup\set{\alpha\in E\ :\ v\in\alpha}$. From our assumption on the properties of $g$, it is clear that for every $v,w\in V$ with $v\neq w$ we have $g(\gr T_v)\cap g(\gr T_w)\neq\emp$ if and only if $vw\in E$. For every $v\in V$ we know that $g(\gr T_v)$ is connected (in fact $\gr T_v$ is a tree), so let $\gamma_v=g(\gr T_v)$ be a path tracing along the line segments $\bigcup_{\alpha\ni v}g(v\alpha)$. This gives us a representation of $\gr G$ as a string graph.

For the converse, suppose that $\gr G$ is a string graph represented by a collection of paths $\gamma_v\su\Rt$ for every $v\in V$. So for every $v\neq w$ in $V$, the intersection $\gamma_v\cap\gamma_w$ is non-empty if and only if $vw\in E$. For every $v\in V$ define $g(v)$ as an arbitrary point in $\gamma_v$, and for every $\alpha=vw\in E$ define $g(\alpha)$ as an arbitrary point in $\gamma_v\cap\gamma_w$. For every $v\in\alpha\in E$, note that both $g(v)$ and $g(\alpha)$ are in $\gamma_v$. We may therefore connect them within this image, and obtain $g:\gr G^*\to\Rt$ such that $g(v\alpha)\su\gamma_v$ for every $v\in\alpha\in E$. For every $(v\alpha,w\beta)\in P_{\sd}(\gr G)$, we clearly have $vw\notin E$, therefore $g(v\alpha)\cap g(w\beta)\su \gamma_v\cap\gamma_w=\emp$ completing our proof.
\end{proof}

It might seem that we have described a second obstruction for string graphs. However, we will now prove in the following lemma that they vanish together, therefore once we know $\vk o_{\gr G}^s$, computing $\vk o_{\gr G}^{\sd}$ tells us nothing new about whether $\gr G$ is a string graph or not. Half of the statement is also our final piece for the proof of Theorem \ref{ob_1_van}.

\begin{lem} \label{ob_eq}
Let $\gr G=(V,E)$ be a graph. Then $\vk o_{\gr G}^s$ vanishes if and only if $\vk o_{\gr G}^{\sd}$ vanishes.
\end{lem}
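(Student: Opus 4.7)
For the easier direction $\vk o^{\sd}_{\gr G}=0\Rightarrow \vk o^s_{\gr G}=0$, I would apply Proposition \ref{vk_prop} to obtain a map $g:\gr G^*\to\Rt$ with $|g(v\alpha)\cap g(w\beta)|$ even for every $(v\alpha,w\beta)\in P_{\sd}(\gr G)$, and compose with the subdivision homeomorphism $\gr G\to\gr G^*$ to produce $f:\gr G\to\Rt$, so that $f(\alpha)=g(v_1\alpha)\cup g(v_2\alpha)$ for each edge $\alpha=v_1v_2$. For any $(\alpha,\beta)\in P_s(\gr G)$ with $\alpha=v_1v_2,\beta=w_1w_2$, the defining condition of $P_s$ forces $v_iw_j\notin E$ for all $i,j$, placing each $(v_i\alpha,w_j\beta)$ in $P_{\sd}(\gr G)$. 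Hence $|f(\alpha)\cap f(\beta)|=\sum_{i,j}|g(v_i\alpha)\cap g(w_j\beta)|$ is a sum of even numbers, and Proposition \ref{vk_prop} yields $\vk o^s_{\gr G}=0$.

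For the harder direction $\vk o^s_{\gr G}=0\Rightarrow \vk o^{\sd}_{\gr G}=0$, I would start from $f$ with $o_f=0$ on $P_s$, derive $g:\gr G^*\to\Rt$ by placing each $g(\alpha)$ at a generic interior point of $f(\alpha)$, and show that $o_g\in\textnormal{span}_{\Zt}(\Phi)$ on $P_{\sd}$; Proposition \ref{vk_prop} then gives $\vk o^{\sd}_{\gr G}=0$. The key observation is that finger moves of the form $\phi^{P_{\sd}}_{v\alpha,\beta}$, in which $\beta\in E$ is viewed as a subdivision vertex of $\gr G^*$, take value $1$ only on pairs $(v\alpha,w\beta)$ with $w\in\beta$ and $vw\notin E$; such a move is thus ``local'' to the table of $P_{\sd}$-entries indexed by the unordered edge pair $\{\alpha,\beta\}$. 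A case analysis on $\{\alpha,\beta\}$ completes the argument: when $(\alpha,\beta)\in P_s$, the table has all four entries $(v_i\alpha,w_j\beta)$, and the row moves $\phi^{P_{\sd}}_{v_i\alpha,\beta}$ together with the column moves $\phi^{P_{\sd}}_{w_j\beta,\alpha}$ $\Zt$-span the $3$-dimensional sum-zero subspace of $\Zt^4$; since $\sum_{i,j}o_g(v_i\alpha,w_j\beta)=|f(\alpha)\cap f(\beta)|=o_f(\alpha,\beta)=0$, the restriction of $o_g$ to this table lies in that span. When $\alpha\cap\beta=\{z\}$, at most one pair $(z'\alpha,z''\beta)$ lies in $P_{\sd}$, and a single local move flips just that entry. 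When $\alpha,\beta$ are disjoint but $(\alpha,\beta)\notin P_s$, a short check on which $v_iw_j\in E$ shows the available row and column moves make each $P_{\sd}$-entry in the table individually flippable.

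The main obstacle lies in the backward-direction case analysis — in particular, verifying that the four row/column finger moves indeed $\Zt$-span the entire sum-zero subspace of $\Zt^4$ in the main case $(\alpha,\beta)\in P_s$, so that the only constraint on cancellable patterns on a $P_s$-table is its parity sum, which is matched exactly by the hypothesis $o_f(\alpha,\beta)=0$. The remaining configurations are routine but need to be enumerated to confirm that no non-$P_s$ table contributes any unrecoverable obstruction.
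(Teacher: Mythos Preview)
Your proposal is correct and matches the paper's approach: both directions go through Proposition~\ref{vk_prop}, and for the backward direction the paper uses exactly the same four ``local'' finger moves $\phi_{v_1\alpha,\beta},\phi_{v_2\alpha,\beta},\phi_{w_1\beta,\alpha},\phi_{w_2\beta,\alpha}$ to adjust parities within each $\{\alpha,\beta\}$ block, with the same case split on whether some $v_iw_j\in E$. The paper phrases the $P_s$ case slightly more informally (the four moves ``allow us to freely move any number of intersection points between the four sets $g(v_i\alpha)\cap g(w_j\beta)$''), but this is precisely your observation that the row and column moves $\Zt$-span the sum-zero subspace of $\Zt^4$.
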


\begin{proof}
We will prove that there exists a function $f:\gr G\to\Rt$ such that $\abs{f(\alpha)\cap f(\beta)}$ is even for every $(\alpha,\beta)\in P_s(\gr G)$, if and only if there exists a function $g:\gr G^*\to\Rt$ such that $\abs{g(v\alpha)\cap g(w\beta)}$ is even for every $(v\alpha,w\beta)\in P_{\sd}(\gr G)$. This statement is equivalent to our lemma, due to Proposition \ref{vk_prop} (applied for both obstructions).

Suppose such $g:\gr G^*\to\Rt$ exists. Let $f:\gr G\to\Rt$ denote the composition of the homeomorphism $\gr G\to\gr G^*$ with $g$. Let $(\alpha,\beta)\in P_s(\gr G)$, where $\alpha=v_1 v_2$ and $\beta =w_1 w_2$. As line segments, we have $\alpha=v_1\alpha\cup v_2\alpha$ and $\beta=w_1\beta\cup w_2\beta$. Clearly we have $(v_i\alpha,w_j\beta)\in P_{\sd}(\gr G)$ for every $i,j=1,2$. We deduce that 
\begin{equation} \label{subd_sum}
\abs{f(\alpha)\cap f(\beta)}=\sum_{i,j=1}^2 \abs{g(v_i\alpha)\cap g(w_j\beta)}
\end{equation}
is even as a sum of even terms.\\
Conversely, if $f:\gr G\to\Rt$ exists, let $g:\gr G^*\to\Rt$ be obtained from $f$ by identifying $\gr G$ and $\gr G^*$ as homeomorphic, taking care that for every $\alpha\in E$, the point $g(\alpha)$ is chosen as a point of $f(\alpha)$ where $f(\alpha)$ doesn't cross itself or any other edges. Suppose that $\abs{g(v\alpha)\cap g(w\beta)}$ is odd, for some $v\in\alpha\in E$ and $w\in\beta\in E$ where $(v\alpha,w\beta)\in P_{\sd}(\gr G)$. We say that the pair $(\alpha,\beta)$ is \emph{problematic}, and denote $\alpha=v_1 v_2$ and $\beta=w_1 w_2$. For every such problematic pair, we will modify $g$ using the finger moves (for $\gr G^*$):
$$\phi_{v_1\alpha,\beta}\ ,\ \phi_{v_2\alpha,\beta}\ ,\ \phi_{w_1\beta,\alpha}\ ,\ \phi_{w_2\beta,\alpha}$$
to obtain some modified $g':\gr G^*\to\Rt$. Note that these finger moves only effect the parity of $\abs{g(v_i\alpha)\cap g(w_j\beta)}$ for $i,j=1,2$, and applying some or all of these finger moves allows us to freely move any number of intersection points between the four sets $g(v_i\alpha)\cap g(w_j\beta)$ ($i,j=1,2$). We proceed as follows: if $v_k w_l\in E$ for some $k,l\in\set{1,2}$, then $(v_k\alpha,w_l\beta)\notin P_{\sd}(\gr G)$, and we may apply the finger moves to guarantee that $\abs{g(v_i\alpha)\cap g(w_j\beta)}$ is even for every $(i,j)\neq(k,l)$. If $v_i w_j\notin E$ for every $i,j=1,2$ then $(\alpha,\beta)\in P_s(\gr G)$. Clearly (\ref{subd_sum}) still holds, and the sum is even by our assumption on $f$. We may therefore apply the finger moves to guarantee that $\abs{g(v_i\alpha)\cap g(w_j\beta)}$ is even for every $i,j=1,2$. Proceeding similarly for every problematic pair $(\alpha,\beta)$, we obtain our modified $g'$ as required.
\end{proof}

\begin{proof}[Proof of Theorem \ref{ob_1_van}]
Let $\gr G$ be a string graph. By Theorem \ref{ob_2_eq}, and Proposition \ref{vk_prop}, $\vk o_{\gr G}^{\sd}$ vanishes. Our proof now follows from Lemma \ref{ob_eq}.
\end{proof}

\section{Final remarks} 
In an attempt to strengthen Theorem \ref{ob_1_van}, we may consider an \emph{integer valued} modified Van Kampen obstruction for string graphs. This can be achieved using cohomology with integer coefficients, or as a variation of the geometric definitions we used. Given $f:\gr G\to\Rt$, every intersection point in $f(\alpha)\cap f(\beta)$ (where $\alpha,\beta\in E(\gr G)$) has either a positive or negative sign. Defining $\widetilde o_f(\alpha,\beta)$ as the sum of these contributions gives us an element $\widetilde o_f\in\bbZ^P$ such that $o_f$ is the modulo $2$ reduction of $\widetilde o_f$. This was already considered (in a different form) by Tutte \cite{Tut}, see \cite[Appendix D]{MTW} for the integer Van Kampen obstruction. Clearly there are functions $f$ such that $\widetilde o_f$ is non-zero while $o_f=0\in\Zt^P$. However, the author is currently unaware of any example $\gr G$ for which $\vk o_{\gr G}^s$ vanishes but the integer valued obstruction does not ($\widetilde o_f\neq 0\in\bbZ^{P_s(\gr G)}$ for every $f$).

Our final remark relates to the computational complexity of recognizing string graphs. Determining if $\vk o_{\gr G}^s$ vanishes can be done in polynomial time (even if we consider the integer valued obstruction), however Kratochv\'il \cite{Kra2} proved that recognizing string graphs is NP-hard (later determined to be NP-complete \cite{SSS}). Proof of NP-hardness is done by reducing the planar 3-satisfiability problem to determining whether certain graphs are string graphs. Avoiding almost all details, a formula $\phi$ (in conjunctive normal form) with certain properties is satisfiable if and only if some graph $\gr G$ derived from $\phi$ is a string graph. String representations of $\gr G$ are in one-to-one correspondence with variable assignments that satisfy $\phi$.\\
Obtaining a string graph representation for $\gr G$ from an assignment which doesn't satisfy $\phi$ would be possible if $C*\overline C_6$ (see Figure \ref{Kratochvil_figure}) were a string graph: representing each subgraph of $\gr G$ corresponding to a false clause in $\phi$ in a manner consistent with the assignment, is equivalent to obtaining a string representation of $C*\overline C_6$. Since we have $f:C*\overline C_6\to\Rt$ for which $o_f=0$, any arbitrary assignment to the variables of $\phi$ indicates that $\vk o_{\gr G}^s$ vanishes, whether $\phi$ is satisfiable or not. We thus deduce, perhaps unsurprisingly, that the computationally easy task of determining whether the string obstruction vanishes is of no use to the NP-hard problem described by Kratochv\'il.

\section*{Acknowledgments} 
This work was completed as part of the PhD thesis of the author. I would like to thank my advisor Gil Kalai, as well as Jan Kratochv\'il and Geva Yashfe.

\bibliographystyle{plain}

\begin{thebibliography}{1}


\bibitem{Kra1}
J. Kratochv\'il, \textit{String graphs. I. The number of critical nonstring graphs is infinite}, Journal of Combinatorial Theory, Series B, \textbf{52} (1991), 53--66.

\bibitem{Kra2}
J. Kratochv\'il, \textit{String graphs. II. Recognizing string graphs is NP-hard}, Journal of Combinatorial Theory, Series B, \textbf{52} (1991), 67--78.

\bibitem{GKK}
J. Kratochv\'il, M. Goljan, P. Ku\v cera, \textit{String graphs}, Academia, Prague, 1986.

\bibitem{MTW}
J. Matou\v sek, M. Tancer, U. Wagner, \textit{Hardness of embedding simplicial complexes in $\bbR^d$}, arxiv:0807.0336 (2008).

\bibitem{PRY}
J. Pach, B. Reed, Y. Yuditsky, \textit{Almost all string graphs are intersection graphs of plane convex sets}, Discrete \& Computational Geometry, \textbf{63} (2020), 888--917.

\bibitem{Sar}
K. S. Sarkaria, \textit{A one-dimensional Whitney trick and Kuratowski’s graph planarity criterion}, Israel Journal of Mathematics, \textbf{73} (1991), 79--89.

\bibitem{Sch}
M. Schaefer, \textit{Hanani-Tutte and related results}, In: Geometry - intuitive, discrete, and convex, Springer, Berlin, Heidelberg, 2013, pp. 259--299.

\bibitem{SSS}
M. Schaefer, E. Sedgwick, D. \v Stefankovi\v c, \textit{Recognizing string graphs in NP}, Proceedings of the thiry-fourth annual ACM symposium on Theory of computing, (2002), 1--6.

\bibitem{Sin}
F. W. Sinden, \textit{Topology of thin film RC circuits}, Bell System Technical Journal, \textbf{45} (1966), 1639--1662.

\bibitem{Tut}
W. T. Tutte, \textit{Toward a theory of crossing numbers}, Journal of Combinatorial Theory, \textbf{8} (1970), 45--53.


\bibitem{Whi}
M. J. White, \textit{$d$-representability as an embedding problem}, (preprint), arxiv:2205.10099.

\end{thebibliography}

\end{document}